\numberwithin{equation}{section}
\newtheorem{thm}{Theorem}[section]
\newtheorem{lemma}[thm]{Lemma}
\newtheorem{prop}[thm]{Proposition}
\newtheorem{definition}[thm]{Definition}
\newtheorem{example}[thm]{Example}
\newtheorem{note}[thm]{Note}
\title{Ideal on CL-algebra}
\author{Safiqul Islam\\
	Department of Mathematics, Taki Government College, West Bengal, India\\
	safiqulwbes@gmail.com}
\begin{document}
	
	\maketitle
	
	\begin{abstract}
		In this paper, we introduce the concept of ideal on CL-algebra. It is proved that this concept generalizes the notion of ideal on Residuated Lattices. Prime ideal on CL-algebra are defined and few interesting properties are obtained. It has been shown that quotient algebra corresponding to CL-algebra is formed with the help of ideal is also a CL-algebra.
	\end{abstract}

{\textbf {Keywords:} {CL-algebra, Ideal, Prime ideal, Affine ideal}}\\

\textbf{Mathematics Subject Classification:} 06B10, 18B35

\section{Introduction}

Ideal theory plays important role in the study of algebraic structures and associated logics, in some cases\cite{ras}. Recently, various researchers work on filters of various algebraic structures. P. H\'ajek introduced the concept of BL-algebra and also introduced the concept of filter on BL-algebra\cite{haj}. After that many authors contributed on various types of filters of BL-algebra\cite{tur1,tur2,sae1,sae2,sae3,sae4,mog,mot,kon}. Filters on other algbraic structures are also available in literature\cite{bor}. Although there are so many papers on filters of BL-algebra, the notion of ideals is missing in BL-algebras for
lack of a suitable algebraic addition. A solution to this problem is given at \cite{lele}. There is a subsequent work on ideals of BL-algebra in \cite{yang}.

Classical Linear Algebra (CL-algebra, in short) were introduced by A. Troelstra\cite{tro} as an algebraic counterpart of linear logic\cite{gir}. CL-algebra can be found as $FL_e$-algbra with involution in \cite{ono}. Various properties of CL-algebra are studied in \cite{senth,sen}. Ideal on CL-algebra is not available in literature. In this paper, we introduced the concept of CL-algebra. Properties of ideals on CL-algebra are studied here.

In the next section, definition of CL-algebra is given. Properties and examples of CL-algebra are also discussed in this section. In section 3, ideal on CL-algebra is introduced. Properties of ideals on CL-algebra are obtained in this section. With the help of ideal, we define a congruence relation on an arbitrary CL-algebra and proved that the corresponding quotient algebra is also a CL-algebra with respect to suitable operations. Some special type of ideals on CL-algebra are discussed in Section 4.

\section{Clasical Linear Algebra}
\begin{definition}
	Let $L$ be a non empty set. A Clasical linear algebra (CL-algebra, in short) \cite{tro} is an algebraic system $L=(L,\cup ,\cap, \bot, \to, \ast,0,1 )$  which satisfy the following conditions:
	\begin{enumerate}
		\item $(L,\cup,\cap,\bot)$ is a lattice with least element $\bot$
		\item $(L, \ast, 1)$ is a commutative monoid with unit 1.
		\item for any $x,y,z \in L$ , $x\ast y \leq z$ if and only if $x \leq y \to z$ [residuation property]
		\item $\sim \sim x=x$, for all $x\in L$ where $\sim x=x\to 0.$ [involution]
	\end{enumerate}
	
\end{definition}

\begin{prop}\label{lem1}
	In every CL-algebra $L$ the following properties hold for all $x,y,x_1, y_1$ in $L$. \cite{tro,senth,sen}
	\begin{enumerate}
		\item $x \ast (y \cup z)= (x \ast y)\cup (x \ast z)$ and moreover, if the join $\underset{i\in I}{\bigcup}y_i$ exists, then $x \ast \underset{i\in I}{\bigcup}y_i= \underset{i\in I}{\bigcup}(x \ast y_i)$. 
		\item $\bot \to \bot$ is the largest element of $L$ and is denoted by $\top$.
		\item If $x, y \leq 1$  then $x \ast y  \leq x\cap y$.
		\item $1 \leq x, y$ then $x \cup y \leq x  \ast y$.
		\item $(x\rightarrow y)\ast (y\rightarrow z)\leq (x\rightarrow z)$.
		\item $1\to x =x$.
		\item If $x\leq x_1, y\leq y_1$ then $x\ast y\leq x_1\ast y_1$ and $x_1\to y\leq x\to y_1$.
		\item $x\to (y\to z) = (x \ast y)\to z$.
		\item $x\ast (x\to y) \leq y$.
		\item If $x\leq y$ then $\sim x\leq \sim y.$
		\item $x\cup y=\sim(\sim x\cap \sim y)$.
		\item $x\cap y=\sim(\sim x\cup \sim y)$.
		\item $x\to y=\sim(x \ast \sim y)$.
		\item $\sim x\to y=\sim(\sim x\ast \sim y)$. We denote $\sim(\sim x\ast \sim y)$ by $x+y$.
		\item $\sim \top =\bot$.
		\item $\sim \top \ast \top = \bot$.
	
\end{enumerate}
\end{prop}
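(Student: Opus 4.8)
The plan is to take residuation (condition 3) as the single engine driving essentially every item, supplemented by the commutativity and associativity of $\ast$ (condition 2) and the involution $\sim\sim x = x$ (condition 4). First I would record the two work-horse consequences of residuation. Monotonicity of $\ast$: from $x\ast y_1\le x\ast y_1$ residuation gives $y_1\le x\to(x\ast y_1)$, so $y\le y_1$ yields $y\le x\to(x\ast y_1)$ and hence $x\ast y\le x\ast y_1$; commutativity upgrades this to joint monotonicity, which is the second half of (7). The evaluation inequality (9), $x\ast(x\to y)\le y$, is literally residuation applied to $x\to y\le x\to y$. With monotonicity in hand, (1) is the statement that $x\ast(-)$, being a left residual, preserves all existing joins: one checks that $x\ast\bigcup_{i}y_i$ is the least upper bound of $\{x\ast y_i\}$, the ``$\ge$'' direction from monotonicity and the ``$\le$'' direction by transporting an arbitrary upper bound $u$ of $\{x\ast y_i\}$ across residuation so that $x\to u$ becomes an upper bound of $\{y_i\}$.

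Applying (1) to the empty join gives $x\ast\bot=\bot$, and then residuation shows $x\le\bot\to\bot$ for every $x$, which is (2). The remaining ``monoid'' items are short residuation arguments: for (6) note $a\le 1\to x\iff a\ast 1\le x\iff a\le x$ for all $a$; (8) is the currying law obtained by iterating residuation, $a\le x\to(y\to z)\iff a\ast x\ast y\le z\iff a\le(x\ast y)\to z$; the antitone half of (7) follows from (9) since $(x_1\to y)\ast x\le(x_1\to y)\ast x_1\le y\le y_1$; and (5) is the same idea, $(x\to y)\ast(y\to z)\ast x\le(y\to z)\ast y\le z$ via two uses of (9). Items (3) and (4) are immediate from monotonicity and the unit law, e.g. $x\le 1$ gives $x\ast y\le 1\ast y=y$ and symmetrically $x\ast y\le x$, whence $x\ast y\le x\cap y$.

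For the involution block I would first establish that $\sim=(-)\to 0$ is an order-reversing involution; this order-reversing property is exactly item (10) (read as $\sim y\le\sim x$ whenever $x\le y$), obtained from (7) by fixing the second argument at $0$, while $\sim\sim x=x$ is condition 4. An order-reversing bijection sends meets to joins and conversely, which yields (11) and (12) after substituting $\sim x,\sim y$ in the inner positions and cancelling with involution. For (13), currying (8) gives $\sim(x\ast\sim y)=(x\ast\sim y)\to 0=x\to(\sim y\to 0)=x\to\sim\sim y=x\to y$; specialising $x\mapsto\sim x$ in (13) produces (14). Finally $\sim\bot=\bot\to 0=\top$ by the same empty-join/residuation argument used for (2), so involution gives $\sim\top=\sim\sim\bot=\bot$, which is (15), and (16) is then $\sim\top\ast\top=\bot\ast\top=\bot$.

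The only places demanding care rather than mechanical application of residuation are the arbitrary-join bookkeeping in (1)--(2) (in particular being explicit that $\bot$ is the empty join, so that $x\ast\bot=\bot$ and $\bot\to 0=\top$) and keeping the direction of $\sim$ straight throughout (10)--(12): negation reverses order, so the De Morgan identities must be set up with $\sim x,\sim y$ inside and the involution cancellations performed at the end. Everything else reduces to a one- or two-line residuation computation, so I expect no genuine obstacle beyond this bookkeeping.
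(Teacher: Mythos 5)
Your proof is correct and complete, but note first that the paper offers no proof of this proposition at all: it is stated as a known result with a citation to \cite{tro,senth,sen}, so there is no in-paper argument to compare against, and your self-contained derivation is genuinely added value. Your decomposition is the standard one and it works: residuation alone yields monotonicity of $\ast$ and the evaluation law $x\ast(x\to y)\leq y$; these dispatch items (1)--(9) by short computations (including the empty-join bookkeeping $x\ast\bot=\bot$ needed for item (2), which you correctly flag as the one place requiring care); and the involution block (11)--(16) follows once $\sim$ is established as an antitone involution, via the dual-isomorphism argument for the De Morgan laws and the currying identity (8) for (13)--(14). One point you handled by silent correction deserves to be made explicit: item (10) as printed in the paper, ``if $x\leq y$ then $\sim x\leq \sim y$,'' is false in a general CL-algebra --- in Example \ref{ex1} one has $0\leq a$ but $\sim 0=1\not\leq a=\sim a$. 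The antitone version you prove, $\sim y\leq\sim x$ whenever $x\leq y$ (obtained from item (7) with second argument fixed at $0$), is the correct statement and is the one the paper itself uses later, e.g.\ in the proof of Proposition \ref{prop1}, where $1\leq x\to y$ is turned into $\sim(x\to y)\leq\sim 1$. A last small remark: your De Morgan step invokes that an order-reversing bijection exchanges meets and joins, which requires the inverse to be order-reversing as well; this holds here precisely because $\sim$ is an involution ($\sim\sim x=x$ is axiom 4), so you have it, but it is worth saying when you appeal to it.
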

\begin{example} \label{ex1}
	Example of a linear CL-algebra is given here.\\
	Let $L=\{\bot,0,a,1,\top\}$. The lattice is shown below. 
	\begin{center}
		\begin{tikzpicture}[scale=1]
		\node (top) at (0,4) {$\top$}; 
		\node (one) at (0,3) {$1$}; 
		\node (a) at (0,2) {$a$}; 
		\node (zero) at (0,1) {$0$};
		\node (bot) at (0,0) {$\bot$};
		\draw (bot) -- (zero) -- (a) -- (one) -- (top);
		\end{tikzpicture}
	\end{center}
	We define $\ast, \to$ as follows\\	
	\begin{tabular}{c|ccccc}
		$\ast$  & $\bot$ & $0$ & $1$ & $a$ & $\top$  \\
		\hline
		$\bot$ & $\bot$ & $\bot$ & $\bot$ & $\bot$ & $\bot$   \\
		$0$ & $\bot$   & $0$ & $0$ & $0$ & $\top$ \\
		$1$ & $\bot$   & $0$ & $1$ & $a$& $\top$ \\
		$a$& $\bot$ & $0$ & $a$& $0$ & $\top$  \\
		$\top$ & $\bot$ & $\top$& $\top$& $\top$ & $\top$		
	\end{tabular} 
\hspace{2cm}
\begin{tabular}{c|ccccc}
	$\to$  & $\bot$ & $0$ & $1$ & $a$ & $\top$  \\
	\hline
	$\bot$ & $\top$ & $\top$ & $\top$ & $\top$ & $\top$   \\
	$0$ & $\bot$   & $1$ & $1$ & $1$ & $\top$ \\
	$1$ & $\bot$   & $0$ & $1$ & $a$& $\top$ \\
	$a$& $\bot$ & $a$ & $1$ & $1$ & $\top$  \\
	$\top$ & $\bot$ & $\bot$& $\bot$& $\bot$ & $\top$	
\end{tabular}
\end{example}

\begin{example} \label{ex2}
	Example of a non-linear CL-algebra is the following.\\
	Let $L=\{\bot,0,a,b,1,\top\}$. The lattice is shown below.
	\begin{center}
		\begin{tikzpicture}[scale=.7]
		\node (top) at (0,2) {$\top$}; 
		\node (a) at (2,0) {$a$}; 
		\node (one) at (0,0) {$1$}; 
		\node (b) at (0,1) {$b$};
		\node (zero) at (0,-1) {$0$}; 
		\node (bot) at (0,-2) {$\bot$};
		\draw (bot) -- (zero) -- (one) -- (b) -- (top);
		\draw (bot) -- (a) -- (top);
		\end{tikzpicture}
	\end{center} 
	We define $\ast, \to$ as follows\\
\begin{tabular}{c|c c c c c c }
	$\ast$ & $\bot$ & $0$ & $1$ & $a$ & $b$ & $\top$  \\
	\hline
	$\bot$ & $\bot$ & $\bot$ & $\bot$ & $\bot$ & $\bot$ & $\bot$   \\
	$0$ & $\bot$ & $0$ & $0$ & $a$ & $0$ & $\top$ \\
	$1$ & $\bot$ & $0$ & $1$ & $a$ & $b$ & $\top$ \\
	$a$ & $\bot$ & $a$ & $a$ & $0$ & $a$ & $\top$ \\
	$b$ & $\bot$ & $0$ & $b$ & $a$ & $0$ & $\top$\\ 
	$\top$ & $\bot$ & $\top$ & $\top$ & $\top$ & $\top$ & $\top$	
\end{tabular}
\hspace{2cm}
\begin{tabular}{c|c c c c c c }
	$\rightarrow$  & $\bot$ & $0$ & $1$ & $a$ & $b$ & $\top$  \\
	\hline
	$\bot$ & $\top$ & $\top$ & $\top$ & $\top$ & $\top$ & $\top$   \\
	$0$ & $\bot$ & $1$ & $1$ & $a$ & $1$ & $\top$ \\
	$1$ & $\bot$ & $0$ & $1$ & $a$ & $b$ & $\top$ \\
	$a$ & $\bot$ & $a$ & $a$ & $1$ & $a$ & $\top$ \\
	$b$ & $\bot$ & $b$ & $1$ & $a$ & $1$ & $\top$\\
	$\top$ & $\bot$ & $\bot$ & $\bot$ & $\bot$ & $\bot$ & $\top$	
\end{tabular}\\
\end{example}

\section{Ideal}
Ideal on CL-algebra is introduced here.
\begin{definition}
	$L=(L,\cup ,\cap, \bot, \to, \ast,0,1 )$ be a CL-algebra and $I$ be a non empty subset of $L$. Then $I$ is called an ideal if the following holds 
	\begin{enumerate}
		\item $0\in I$.
		\item $x,y\in I$ then $x + y\in I$  i.e., $\sim  x\to y\in I $ i.e., $\sim(\sim x\ast \sim y)\in I$ and $x\cup y\in I$.
		\item If $y \in I$ and $x\leq y$ then $x \in I$.
		\end{enumerate}
\end{definition}
\begin{example}
	In example \ref{ex2} if we take $I=\{\bot,0,1,b \}$ then $I$ is an ideal of $L.$\\
	Also $I=\{\bot,0,1 \}$ is another ideal of $L$.
	
\end{example}
\begin{prop} \label{prop1}
	Let $x\leq y$ in $L$ then $\sim (x \to y)\in I$.
	\begin{proof}
		Let $x\leq y$ then  $1\ast x\leq y$, so $1\leq x\to y$.\\
		therefore $\sim(x\to y)\leq \sim1=0$ as $0\in I$,  then by definition of ideal,  $ \sim(x\to y)\in I$.
	\end{proof}
\end{prop}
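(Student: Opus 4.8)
The plan is to reduce the claim to the downward-closure axiom of an ideal. Since $0 \in I$ and $I$ is closed downward, it suffices to show that $\sim(x \to y) \leq 0$; the membership $\sim(x \to y) \in I$ then follows at once.

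First I would rewrite the hypothesis $x \leq y$ using that $1$ is the monoid unit. Because $1 \ast x = x$, the inequality $x \leq y$ is literally $1 \ast x \leq y$, and the residuation property (condition 3 of the definition of CL-algebra) converts this into $1 \leq x \to y$. This is the step that turns the comparison of $x$ and $y$ into a statement placing $x \to y$ above the unit.

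Next I would apply the order-reversing behaviour of the negation $\sim$. From $1 \leq x \to y$ one obtains $\sim(x \to y) \leq \sim 1$; concretely this is an instance of the antitone law $u_1 \to v \leq u \to v_1$ valid for $u \leq u_1$ and $v \leq v_1$ (Proposition \ref{lem1}(7)), taken with $u = 1$, $u_1 = x \to y$ and $v = v_1 = 0$, which gives $(x \to y) \to 0 \leq 1 \to 0$, that is, $\sim(x \to y) \leq \sim 1$. Since $\sim 1 = 1 \to 0 = 0$ by Proposition \ref{lem1}(6), this reads $\sim(x \to y) \leq 0$.

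Finally, because $0 \in I$ and $I$ is downward closed, the bound $\sim(x \to y) \leq 0$ forces $\sim(x \to y) \in I$, which is the assertion. I do not expect any genuine obstacle: the argument is a short chain of the defining axioms, and the only point that needs care is invoking $\sim$ in its order-reversing form (rather than the monotone direction) together with the identity $\sim 1 = 0$.
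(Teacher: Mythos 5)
Your proposal is correct and follows essentially the same route as the paper's own proof: rewrite $x \leq y$ as $1 \ast x \leq y$, apply residuation to get $1 \leq x \to y$, deduce $\sim(x \to y) \leq \sim 1 = 0$, and conclude by downward closure since $0 \in I$. You are in fact slightly more careful than the paper, citing Proposition \ref{lem1}(7) for the antitone step and Proposition \ref{lem1}(6) for $\sim 1 = 0$, steps the paper leaves implicit.
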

\begin{thm}
	Let $I$ be an ideal in  a CL-algebra $L$. Define a binary relation $\rho$ on $L$ by $x \rho y$  if and only if $x \ast \sim y \in I$ and $y \ast \sim x \in I$
	then  $\rho$ is a congruence relation on $L$. The set of all congruence  classes is denoted by $L/I$ i.e., \\
	$L/I = {\{[x]: x \in L\}} $ where $[x]= \{y \in L : x \rho y \}$.
	
\end{thm}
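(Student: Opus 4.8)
The plan is to begin by rewriting the relation in a more symmetric and manageable form. By Proposition~\ref{lem1}(13) together with the involution axiom, $x\ast\sim y=\sim(x\to y)$, so that $x\,\rho\,y$ is equivalent to the pair of membership conditions $\sim(x\to y)\in I$ and $\sim(y\to x)\in I$. I would use this reformulation throughout, since $\sim(\cdot\to\cdot)$ interacts cleanly with the residuation and negation identities of Proposition~\ref{lem1}, whereas the multiplicative form $x\ast\sim y$ does not. I record at the outset that $\sim$ is order-reversing: taking $y=y_1=0$ in the second inequality of Proposition~\ref{lem1}(7) gives $x\le x'\Rightarrow\sim x'\le\sim x$. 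This antitonicity is what makes the downward closure of $I$ usable at every step.

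For the equivalence-relation part, reflexivity is exactly Proposition~\ref{prop1} applied to $x\le x$, which gives $\sim(x\to x)\in I$; and symmetry is immediate, the defining condition being symmetric in its two arguments. For transitivity, assume $\sim(x\to y),\sim(y\to z)\in I$. Starting from Proposition~\ref{lem1}(5), namely $(x\to y)\ast(y\to z)\le x\to z$, I apply the antitone map $\sim$ to obtain $\sim(x\to z)\le\sim\big((x\to y)\ast(y\to z)\big)$. The right-hand side equals $\sim(x\to y)+\sim(y\to z)$ by the definition of $+$ in Proposition~\ref{lem1}(14), and this lies in $I$ because $I$ is closed under $+$; downward closure then forces $\sim(x\to z)\in I$. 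The symmetric computation yields $\sim(z\to x)\in I$, so $x\,\rho\,z$.

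For compatibility I would reduce everything to the operations $\sim$, $\ast$, and $\cup$. Compatibility with $\sim$ is automatic, since by commutativity of $\ast$ and the involution the two conditions defining $\sim x\,\rho\,\sim y$ coincide with those defining $x\,\rho\,y$. For $\ast$, I first prove the one-variable statement $x\,\rho\,x'\Rightarrow x\ast y\,\rho\,x'\ast y$ from the inequality $x\to x'\le(x\ast y)\to(x'\ast y)$, which follows by residuation from $(x\to x')\ast x\le x'$ (Proposition~\ref{lem1}(9)) and monotonicity of $\ast$ (Proposition~\ref{lem1}(7)); antitonicity of $\sim$ and downward closure then finish it as in the transitivity step. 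Chaining $x\ast y\,\rho\,x'\ast y$ with $x'\ast y\,\rho\,x'\ast y'$ through transitivity gives full compatibility. Compatibility with $\to$ drops out of $x\to y=\sim(x\ast\sim y)$ (Proposition~\ref{lem1}(13)) and the $\sim$- and $\ast$-compatibilities. For $\cup$, the clean route is the inequality $(x\to x')\cap(y\to y')\le(x\cup y)\to(x'\cup y')$, proved by residuation: distributing $\ast$ over $\cup$ (Proposition~\ref{lem1}(1)) reduces the goal to the two instances $(x\to x')\ast x\le x'$ and $(y\to y')\ast y\le y'$ of Proposition~\ref{lem1}(9). Applying $\sim$ and using $\sim(p\cap q)=\sim p\cup\sim q$ (from Proposition~\ref{lem1}(12)), the bounding term becomes $\sim(x\to x')\cup\sim(y\to y')$, which lies in $I$ since $I$ is closed under $\cup$; downward closure gives $\sim\big((x\cup y)\to(x'\cup y')\big)\in I$, and symmetrically for the reverse direction, so $x\cup y\,\rho\,x'\cup y'$. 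Compatibility with $\cap$ then follows from $x\cap y=\sim(\sim x\cup\sim y)$ together with the $\sim$- and $\cup$-compatibilities. Since the constants are nullary and automatically preserved, $\rho$ is a congruence.

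The main obstacle is the lattice compatibility. Because $\ast$ distributes over $\cup$ but not in general over $\cap$, a direct attempt to bound $(x\cup y)\ast\sim(x'\cup y')$ using the multiplicative definition stalls on the cross terms. The decisive idea is to pass entirely to the residual form $\sim(\cdot\to\cdot)\in I$ and to isolate the single meet inequality $(x\to x')\cap(y\to y')\le(x\cup y)\to(x'\cup y')$, whose proof is precisely where distributivity of $\ast$ over $\cup$ (Proposition~\ref{lem1}(1)) enters. I would also take care to invoke $\sim$ as order-reversing at each stage, so that the inequalities point in the direction needed for downward closure of $I$ to apply.
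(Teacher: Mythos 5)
Your proof is correct, but be aware that the paper itself contains no proof of this theorem: the statement is left bare, and the only related argument appears later, in the proposition that $L/I$ is closed under $\cap$ (with the remaining operations dismissed as ``by the above procedure''). So your argument genuinely fills an omission rather than paralleling an existing proof. Where comparison is possible, your core tools coincide with what the paper uses elsewhere: your transitivity step --- applying the antitone $\sim$ to $(x\to y)\ast(y\to z)\le x\to z$ and rewriting $\sim\bigl((x\to y)\ast(y\to z)\bigr)$ as $\sim(x\to y)+\sim(y\to z)\in I$ --- is exactly the computation in the paper's proof of Proposition~\ref{prop2}, and your reflexivity via Proposition~\ref{prop1} is the intended use of that proposition. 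For the lattice operations you diverge: the paper's closedness proof handles $\cap$ directly in multiplicative form, bounding $(x\cap y)\ast\sim x_1$ by $\sim(x\to x_1)=x\ast\sim x_1$ via $x\cap y\le x$, then assembling $(x\cap y)\ast\sim(x_1\cap y_1)$ as a join of two terms of $I$ using De Morgan and distributivity of $\ast$ over $\cup$; you instead prove the single residual inequality $(x\to x')\cap(y\to y')\le(x\cup y)\to(x'\cup y')$ and obtain $\cap$-compatibility from the $\cup$- and $\sim$-cases via $x\cap y=\sim(\sim x\cup\sim y)$. Both are sound; yours is more systematic (everything reduces to $\sim$, $\ast$, $\cup$ plus one inequality), the paper's more elementary case by case.

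Two small remarks. First, your closing claim that a direct bound on $(x\cup y)\ast\sim(x'\cup y')$ ``stalls on cross terms'' is not accurate: since $\sim(x'\cup y')=\sim x'\cap\sim y'$, distributing $\ast$ over the join gives $(x\cup y)\ast(\sim x'\cap\sim y')\le(x\ast\sim x')\cup(y\ast\sim y')\in I$, and downward closure finishes it --- this is precisely the style of the paper's own $\cap$-computation, so your residual detour is elegant but not forced. Second, a point in your favour: you derive antitonicity of $\sim$ from Proposition~\ref{lem1}(7) by specializing $y=y_1=0$, rather than citing item~10 of that proposition, which as printed in the paper (``if $x\le y$ then $\sim x\le\sim y$'') is evidently a typo for the antitone statement; your proof is thus independent of that misprint.
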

Now define $[x] \ \Box\ [y]$ = $ [x \Box y]$ where $\Box \in \{\cap, \cup, \to \ast\}$ and $\sim [x]= [\sim x]$.
\begin{prop} \label{prop2}
$[x]\leq [y]$ if and only if $\sim (x\to y) \in I$.
\end{prop}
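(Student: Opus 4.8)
The plan is to reduce the statement to the membership $x \ast \sim y \in I$ and then read off the order of $L/I$ from the quotient lattice structure. First I would record the identity $\sim(x\to y) = x \ast \sim y$: applying $\sim$ to property (13) of Proposition \ref{lem1}, namely $x \to y = \sim(x\ast \sim y)$, and using the involution $\sim\sim z = z$, gives $\sim(x\to y) = x \ast \sim y$. Hence the proposition is equivalent to the claim that $[x]\leq [y]$ if and only if $x \ast \sim y \in I$. Next, since $L/I$ is a lattice under the induced operations (by the preceding theorem), $[x]\leq [y]$ means $[x]\cap[y]=[x]$, i.e. $[x\cap y]=[x]$, i.e. $x \mathrel{\rho} (x\cap y)$. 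Unwinding the definition of $\rho$, this reads $x \ast \sim(x\cap y)\in I$ and $(x\cap y)\ast \sim x \in I$.

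The second of these two conditions is automatic. Taking $y=0$ in property (9) gives $x \ast \sim x = x \ast (x\to 0)\leq 0$, and since $0\in I$ and $I$ is downward closed we get $x \ast \sim x \in I$. As $x\cap y \leq x$, monotonicity of $\ast$ (property 7) yields $(x\cap y)\ast \sim x \leq x \ast \sim x$, whence $(x\cap y)\ast \sim x \in I$ for all $x,y$. Therefore $[x]\leq[y]$ holds precisely when $x \ast \sim(x\cap y)\in I$.

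It remains to compare $x \ast \sim(x\cap y)$ with $x \ast \sim y$, which is the crux. Using property (11) together with the involution, $\sim(x\cap y)=\sim x \cup \sim y$, so by the distributivity law (property 1), $x \ast \sim(x\cap y) = (x\ast \sim x)\cup(x\ast \sim y)$. Since $x\ast \sim x \in I$ always, and a join $a\cup b$ with $a\in I$ lies in $I$ exactly when $b\in I$ (the forward implication is closure under $\cup$, the reverse is downward closure applied to $b\leq a\cup b$), we conclude that $x \ast \sim(x\cap y)\in I$ if and only if $x \ast \sim y \in I$. Combining the three steps gives $[x]\leq[y]$ iff $x\ast\sim y\in I$ iff $\sim(x\to y)\in I$. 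I expect the main obstacle to be the reduction itself, namely recognizing that one half of the congruence condition is free and that $\sim(x\cap y)$ splits as $\sim x\cup \sim y$; once these are in place the ideal axioms close the argument with no further computation.
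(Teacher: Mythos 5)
Your proof is correct, but it takes a genuinely different route from the paper's. The paper argues the two directions separately and asymmetrically: for the forward direction it invokes the composition inequality $(x\to(x\cap y))\ast((x\cap y)\to y)\leq(x\to y)$ (Proposition \ref{lem1}.5) together with the sum operation $+$ and Proposition \ref{prop1}; for the converse it runs a residuation computation with the auxiliary element $(x\to x)\cap(x\to y)$, showing $x\ast\{(x\to x)\cap(x\to y)\}\leq x\cap y$ and then passing to negations. You instead prove a single chain of equivalences built on two observations the paper never isolates: first, that half of the congruence condition $x\,\rho\,(x\cap y)$ is automatic, since $x\ast\sim x=x\ast(x\to 0)\leq 0$ by Proposition \ref{lem1}.9 and hence $(x\cap y)\ast\sim x\in I$ by monotonicity and downward closure; second, the identity $x\ast\sim(x\cap y)=(x\ast\sim x)\cup(x\ast\sim y)$, obtained from De Morgan (Proposition \ref{lem1}.11 plus involution) and the distributivity of $\ast$ over $\cup$ (Proposition \ref{lem1}.1), after which the ideal axioms (closure under $\cup$ one way, downward closure the other) give $x\ast\sim(x\cap y)\in I$ if and only if $x\ast\sim y=\sim(x\to y)\in I$. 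Your argument is shorter, handles both directions simultaneously, avoids the $+$ operation and the residuation step entirely, and makes visible exactly which CL-algebra properties carry the load; the paper's version, by contrast, works at the level of implications and would transfer more directly to structures where $\ast$ is not known to distribute over finite joins --- though in a CL-algebra that distributivity is guaranteed, so nothing is lost by your approach here. All the individual steps check out against Proposition \ref{lem1} and the ideal axioms as stated in the paper.
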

\begin{proof}
	Let $[x]\leq [y]$ then $[x]\cap [y]=[x]$ i.e., $[x\cap y]=[x]$.
Then by definition of $\rho$,  $(x\cap y)\ast \sim x,$ $\sim(x\cap y)\ast x$ both are in $I$.\\
Now $x \cap y\leq y$ therefore $\sim((x\cap y)\to y)\in I$. \\
Also $\sim (x\to (x\cap y))\in I$. Again we have 
$(x\to (x\cap y))\ast ((x\cap y)\to y)\leq (x\to y)$ so $\sim (x\to y)\leq \sim ( ( x\to (x\cap y)\ast ((x\cap y)\to y))\\
=\sim  ( x\to (x\cap y))+ \sim ((x\cap y)\to y)\in I $, so $\sim (x \to y)\in I$.\\

Conversely assume that $\sim(x\to y)\in I$.\\
Again $\sim(x\to x)\leq 0$, therefore $\sim(x\to x) \cup \sim (x\to y)\in I$.\\
Hence $\sim\{(x\to x)\cap(x\to y)\}\in I$.\\
Again $x\ast\{(x\to x)\cap (x\to y)\}\leq x\ast (x\to y)\leq y$ by Proposition \ref{lem1}.9\\
and $x\ast\{(x\to x)\cap (x\to y)\}\leq x\ast (x\to x)\leq x$.\\
then $x\ast\{(x\to x)\cap (x\to y)\}\leq (x\cap y)$, \\
therefore $\{(x\to x)\cap (x\to y)\}\leq(x \to  (x\cap y)$.\\
Thus $\sim (x \to  (x\cap y)\leq \sim\{(x\to x)\cap(x\to y)\}$\\
Therefore $\sim (x \to  (x\cap y) \in I$ or $x\ast \sim (x\cap y)\in I$......(i)\\
Again $(x\cap y)\leq x$ implies $\sim ((x\cap y)\to x)\in I$ or $(x\cap y)\ast \sim x\in I$.....(ii)\\
From (i) and (ii) we get $[x]\leq [y]$.
\end{proof}
\begin{lemma}\label{lem2}
	In a CL-algebra $L$,  $(z\to x) \cap (z \to y)= (z)\to (x \cap y)$ for all $x,y,z \in L$.
\end{lemma}
\begin{proof}
	As $x \cap y \leq x,y$, by Theorem \ref{lem1}.7, we get $z\to (x \cap y)\leq z \to x, z\to y$. \\ 
	Therefore $z\to (x\cap y)\leq (z\to x)\cap(z\to y)$. \\ 
	Now  $z\ast\{(z \to x)\cap(z \to y)\} \leq z\ast (z\to x)\leq x $. \\
	Similarly, $z\ast\{(z \to x)\cap(z \to y)\} \leq y $.
	So $z\ast \{(z \to x)\cap (z \to y)\} \leq x \cap y$. \\
	Then by residuation property, we get $\{(z \to x)\cap (z \to y)\} \leq z\to  (x \cap y)$. \\	
	Hence $\{(z \to x)\cap (z \to y)\} = z\to  (x \cap y)$. 	
\end{proof}
\begin{prop}
	$L/I$ is closed under $\Box$, where $\Box \in \{\cap, \cup, \to \ast, \sim\}$.
\end{prop}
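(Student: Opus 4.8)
The assertion that $L/I$ is closed under each $\Box$ is, once the definitions $[x]\,\Box\,[y]=[x\,\Box\,y]$ and $\sim[x]=[\sim x]$ are unwound, precisely the claim that these operations are well defined on congruence classes; this is the operative content of the statement. Concretely, I must show that $\rho$ is compatible with every operation: whenever $x\rho x'$ and $y\rho y'$ one has $(x\,\Box\,y)\rho(x'\,\Box\,y')$, and whenever $x\rho x'$ one has $\sim x\,\rho\,\sim x'$. (One could alternatively cite the Theorem asserting that $\rho$ is a congruence, but I prefer to verify compatibility for these specific operations.) Throughout I would use the reformulation $x\rho y\iff\sim(x\to y)\in I$ and $\sim(y\to x)\in I$, obtained from the definition of $\rho$ by rewriting $x\ast\sim y=\sim\sim(x\ast\sim y)=\sim(x\to y)$ via involution and Proposition~\ref{lem1}.13; equivalently, by Proposition~\ref{prop2}, $x\rho y$ says $[x]=[y]$ in the poset $(L/I,\le)$.

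My plan is to reduce the five cases to the two genuinely different ones, $\cup$ and $\ast$. First I would dispose of $\sim$: from Proposition~\ref{lem1}.13 and involution one gets $\sim y\to\sim x=\sim(\sim y\ast\sim\sim x)=\sim(x\ast\sim y)=x\to y$, so $\sim(\sim x\to\sim x')=\sim(x'\to x)$ and $\sim(\sim x'\to\sim x)=\sim(x\to x')$; both lie in $I$ when $x\rho x'$, giving $\sim x\,\rho\,\sim x'$ at once. Then $\cap$ follows from $\cup$ together with $\sim$ via the De Morgan laws (Proposition~\ref{lem1}.11 and~\ref{lem1}.12), and $\to$ follows from $\ast$ together with $\sim$ via $x\to y=\sim(x\ast\sim y)$ (Proposition~\ref{lem1}.13). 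Hence only $\cup$ and $\ast$ require direct work.

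For $\cup$ I would first record the order-dual of Lemma~\ref{lem2}, namely $(x\cup y)\to z=(x\to z)\cap(y\to z)$, proved in the same style: one inclusion by antitonicity of $\to$ in its first argument, the reverse by residuation after bounding $(x\cup y)\ast\bigl[(x\to z)\cap(y\to z)\bigr]$ using distributivity (Proposition~\ref{lem1}.1) and $x\ast(x\to z)\le z$ (Proposition~\ref{lem1}.9). Assuming $x\rho x'$ and $y\rho y'$, monotonicity of $\to$ in its second argument (Proposition~\ref{lem1}.7) gives $x\to x'\le x\to(x'\cup y')$, whence by antitonicity of $\sim$ (Proposition~\ref{lem1}.10) and downward closure of $I$ we get $\sim(x\to(x'\cup y'))\in I$, and likewise $\sim(y\to(x'\cup y'))\in I$. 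Closing $I$ under $\cup$ and applying Proposition~\ref{lem1}.11 yields $\sim\bigl[(x\to(x'\cup y'))\cap(y\to(x'\cup y'))\bigr]=\sim\bigl[(x\cup y)\to(x'\cup y')\bigr]\in I$; the symmetric computation (primes swapped) gives the reverse membership, so $(x\cup y)\rho(x'\cup y')$.

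The main obstacle is $\ast$, which needs a multiplicative analogue of Proposition~\ref{lem1}.5. I would prove $(x\to x')\ast(y\to y')\le(x\ast y)\to(x'\ast y')$ by combining $(x\to x')\ast x\le x'$ and $(y\to y')\ast y\le y'$ (Proposition~\ref{lem1}.9) with compatibility of $\ast$ with $\le$ (Proposition~\ref{lem1}.7) and then applying residuation. Taking $\sim$ (Proposition~\ref{lem1}.10) and using $\sim(a\ast b)=\sim a+\sim b$ (immediate from the definition of $+$ in Proposition~\ref{lem1}.14 together with involution), I obtain $\sim\bigl[(x\ast y)\to(x'\ast y')\bigr]\le\sim(x\to x')+\sim(y\to y')$. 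Since $\sim(x\to x')\in I$ and $\sim(y\to y')\in I$, closure of $I$ under $+$ places the right-hand side in $I$, and downward closure then gives $\sim\bigl[(x\ast y)\to(x'\ast y')\bigr]\in I$; the symmetric argument supplies the reverse membership, establishing $(x\ast y)\rho(x'\ast y')$. The delicate point throughout is arranging the inequalities in the correct direction, so that antitonicity of $\sim$ converts each order bound into a downward-closure argument inside $I$.
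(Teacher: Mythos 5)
Your proof is correct, and it takes a genuinely different route from the paper's. The paper verifies compatibility only for $\cap$: fixing $x_1\in[x]$ and $y_1\in[y]$, it places $(x\cap y)\ast\sim x_1$ and $(x\cap y)\ast\sim y_1$ in $I$ using antitonicity of $\to$ in its first argument (Proposition \ref{lem1}.7) together with downward closure, takes the join in $I$, and converts it to $(x\cap y)\ast\sim(x_1\cap y_1)$ via Lemma \ref{lem2} and De Morgan; the remaining four operations are then dispatched with the single sentence that ``by the above procedure'' closure under $\ast,\cup,\to,\sim$ follows. You instead reduce the five cases to two, handling $\sim$ by the contraposition identity $\sim y\to\sim x=x\to y$, deriving $\cap$ from $\cup$ and $\sim$ (Propositions \ref{lem1}.11--12) and $\to$ from $\ast$ and $\sim$ (Proposition \ref{lem1}.13), and then proving the two base cases directly from two key inequalities: the order dual $(x\cup y)\to z=(x\to z)\cap(y\to z)$ of Lemma \ref{lem2}, and the multiplicative bound $(x\to x')\ast(y\to y')\le(x\ast y)\to(x'\ast y')$, each obtained correctly by residuation. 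This buys something real: the paper's appeal to analogy is least convincing precisely for $\ast$ and $\to$, since its engine, Lemma \ref{lem2}, is a statement about lattice meets and does not transfer verbatim to the monoid operation, whereas your multiplicative lemma (a parallel-composition analogue of Proposition \ref{lem1}.5) is exactly the missing ingredient, and your explicit reductions make the ``similarly'' steps honest. Your initial rewriting $x\ast\sim y=\sim(x\to y)$ is interchangeable with the paper's formulation by involution and Proposition \ref{lem1}.13, so the two proofs agree on the underlying congruence; yours is simply the more complete and better-organized verification.
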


\begin{proof} We first show that $L/I$ is closed under $\cap$.\\
To show that $[x\cap y]=[x] \cap [y]$\\
let, $x_1\in[x] , y_1\in[y]$\\
Now $[x\cap y]=[x_1\cap y_1]$.Therefore $\sim (x\cap y)\ast (x_1\cap y_1)\in I$ and $(x\cap y)\ast \sim(x_1\cap y_1)\in I$\\
Also $\sim x_1\ast x$, $x_1\ast \sim x$, $y_1\ast \sim y$, $y\ast \sim y$ $ \in I$.\\
By Proposition \ref{lem1}.7, $x\to x_1\leq ((x\cap y)\to x_1)$ therefore $\sim ((x\cap y)\to x_1)\leq \sim(x\to x_1)= (x\ast \sim x_1)$
, since $(x\ast \sim x_1) \in I$ i,e $(x\cap y)\ast \sim x_1 \in I$.\\
Similarly $(x\cap y)\ast\sim y_1\in I$. \\
Then by definition of ideal $((x\cap y)\ast \sim x_1 )\cup ((x\cap y)\ast\sim y_1) \in I$     ........(i).\\
 by Lemma \ref{lem2} $\sim (((x\cap y) \ast \sim x_1)\cap ((x\cap y) \ast  \sim y_1)) =\sim ((x\cap y)\ast \sim (x_1 \cap y_1))$\\
or, $\sim ((((x\cap y) \ast \sim x_1)\cup ((x\cap y) \ast \sim y_1))) =\sim ((x\cap y)\ast\sim  (x_1 \cap y_1))$\\
or,$(((x\cap y) \ast \sim x_1)\cup ((x\cap y) \ast \sim y_1)) =((x\cap y)\ast \sim  (x_1 \cap y_1))$\\
then by (i) $((x\cap y)\ast \sim  (x_1 \cap y_1))\in I$\\
Similarly by changing the roles of $x, y$ with $x_1, y_1$ respectively we get $(\sim (x\cap y)\ast  (x_1 \cap y_1))\in I$
Then  $[x\cap y]= [(x_1 \cap y_1)]$.
\end{proof}
By the above procedure we also prove that  $L/I$ closed with respect to $\ast,\cup,\to,\sim$. 
\begin{thm}
$(L/I, \cap, \cup,[\bot]\ast, \to ,[0], [1])$ is a CL-algebra with respect to the operations defined by\\
$[x]\cup [y]=[x\cup y]$\\
$[x]\cap [y]=[x\cap y]$\\
$[x]\ast [y]=[x\ast y]$\\
$[x]\to [y]=[x\to y]$.\\
$\sim [x]=[\sim x]$
\end{thm}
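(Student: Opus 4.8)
The plan is to verify the four defining conditions of a CL-algebra for $L/I$ one at a time, exploiting the fact that the quotient map $\pi\colon L\to L/I$ given by $\pi(x)=[x]$ is a well-defined surjective homomorphism. Indeed, the preceding Theorem guarantees that $\rho$ is a congruence and the preceding Proposition guarantees that $L/I$ is closed under all five operations, so each operation on $L/I$ is well-defined and $\pi$ commutes with $\cup$, $\cap$, $\ast$, $\to$ and $\sim$. The governing principle is that every \emph{equational} identity holding in $L$ is transported verbatim to $L/I$ by applying $\pi$ to both sides, while the two conditions that involve the order --- the existence of a least element and the residuation law --- will be handled through the order characterisation of Proposition \ref{prop2}.

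First I would dispose of conditions (1) and (2). The lattice axioms (commutativity, associativity, idempotency and the two absorption laws) and the commutative-monoid axioms (commutativity, associativity and $1\ast x=x$) are all equational identities valid in $L$; applying $\pi$ turns each of them into the corresponding identity in $L/I$, for instance $[1]\ast[x]=[1\ast x]=[x]$ and $[x]\cap([x]\cup[y])=[x\cap(x\cup y)]=[x]$. Hence $(L/I,\cup,\cap)$ is a lattice and $(L/I,\ast,[1])$ is a commutative monoid with unit $[1]$. It then remains to see that $[\bot]$ is the least element: by Proposition \ref{prop1}, $x\le y$ in $L$ forces $\sim(x\to y)\in I$, which by Proposition \ref{prop2} means $[x]\le[y]$, so $\pi$ is order-preserving; since $\bot\le x$ for every $x\in L$, this yields $[\bot]\le[x]$ for every class, as required.

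The crux is condition (3), the residuation law $[x]\ast[y]\le[z]\iff[x]\le[y]\to[z]$, which I would prove by rewriting both inequalities via Proposition \ref{prop2}. The left-hand side reads $[x\ast y]\le[z]$, i.e. $\sim((x\ast y)\to z)\in I$, while the right-hand side reads $[x]\le[y\to z]$, i.e. $\sim(x\to(y\to z))\in I$. By Proposition \ref{lem1}.8 we have the identity $(x\ast y)\to z=x\to(y\to z)$ in $L$, so the two membership conditions are literally one and the same, and the equivalence follows at once. This is the step I expect to be the main obstacle --- not because the computation is long, but because it requires being certain that the order appearing in the residuation law coincides with the lattice order on $L/I$, so that Proposition \ref{prop2} genuinely applies to it; this is exactly why Proposition \ref{prop2} was first established in terms of $[x]\cap[y]=[x]$.

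Finally, condition (4) is immediate: since $\sim[x]=[\sim x]$ by definition and $\sim\sim x=x$ holds in $L$, we obtain $\sim\sim[x]=[\sim\sim x]=[x]$ for all $x$, so the involution law descends to $L/I$. Collecting the four verified conditions, $(L/I,\cup,\cap,[\bot],\to,\ast,[0],[1])$ is a CL-algebra, which completes the proof.
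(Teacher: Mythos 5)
Your proposal follows essentially the same route as the paper: the residuation law is verified exactly as in the paper's proof, by translating both inequalities through Proposition \ref{prop2} and identifying them via the identity $(x\ast y)\to z=x\to(y\to z)$ of Proposition \ref{lem1}.8, and the involution step $\sim\sim[x]=[\sim\sim x]=[x]$ is identical. The only difference is that you spell out the lattice, monoid and least-element verifications (via the congruence and the order-preservation of the quotient map) that the paper dismisses as ``obvious'' and ``clear'', which is a harmless and indeed welcome elaboration.
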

\begin{proof}
\begin{itemize}
\item It is obvious that $L/I$ is a bounded lattice with least element $[\bot]$ and top element $[\top]$, where\\
$[\bot]=\{x\in L:x\ast \top\in I\}$ and $[\top]=\{x\in L:\top \ast \sim x \in I\}$.
\item It is clear that $(L/I,\ast,[1])$ is a commutative moniod with $[1]$ as unit, where \\
$[1]=\{x\in L:x\ast \sim 1\in I, \sim x\ast 1\in I\}=\{x\in L:\sim x\in I ,\sim(x\to 1)\in I\}$
\item Let $[x],[y],[z] \in L/I$ with $[x]\ast [y]\leq [z]$ i.e., $[x\ast y]\leq [z]$.\\
Then by Proposition \ref{prop2}, $\sim ((x \ast y)\to z)\in I$.\\
Again by Proposition \ref{lem1}.8, $x\to (y \to z)=(x\ast y)\to z$, therefore \\
$\sim (x\to (y\to z))= \sim ((x\ast y)\to z) $, hence $\sim (x\to (y\to z)\in I$, this implies that $[x]\leq [y\to z].$\\
Conversely assume that $[x]\leq[y\to z]$, then $\sim (x\to (y\to z))\in I$. So, $\sim ((x\ast y)\to z)\in I$. Thus $[x\ast y]\leq [z]$.
\item Finally $\sim\sim[x]=\sim[\sim x]=[\sim \sim x]=[x]$. 
\end{itemize}
Hence $(L/I, \cap, \cup,[\bot]\ast, \to ,[0], [1])$ is a CL-algebra.
\end{proof}
\begin{thm}
If $I=\{x:x\leq 0\}$ then $[x]=\{x\}$.
\end{thm}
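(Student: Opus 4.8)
The plan is to show that every congruence class is a singleton, which amounts to proving that $x\,\rho\,y$ forces $x=y$. First I would unwind the definition of $\rho$ for this particular $I$: since $I=\{z:z\le 0\}$, the relation $x\,\rho\,y$ says exactly that $x\ast\sim y\le 0$ and $y\ast\sim x\le 0$. So the whole task reduces to extracting $x=y$ from these two inequalities.

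The crux is to turn each inequality into an order comparison by invoking the residuation property (condition~3 of the definition of a CL-algebra). Applied to $x\ast\sim y\le 0$, residuation gives $x\le\;\sim y\to 0$. But $\sim y\to 0$ is, by the definition $\sim z=z\to 0$ applied to $z=\sim y$, just $\sim\sim y$, which equals $y$ by the involution axiom (condition~4). Hence the first inequality is equivalent to $x\le y$. Running the identical argument on $y\ast\sim x\le 0$ yields $y\le\;\sim x\to 0=\sim\sim x=x$, that is $y\le x$.

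Combining $x\le y$ and $y\le x$ with antisymmetry of the lattice order gives $x=y$, so $[x]=\{x\}$. Before invoking $\rho$ at all, it is worth confirming that $I=\{z:z\le 0\}$ really is an ideal, so that the congruence and the classes are defined: downward closure and $0\in I$ are immediate, $x\cup y\le 0$ is clear when $x,y\le 0$, and closure under $+$ follows because $x,y\le 0$ forces $\sim x,\sim y\ge 1$, whence $1\le\;\sim x\ast\sim y$ by Proposition~\ref{lem1}.4 and therefore $x+y=\sim(\sim x\ast\sim y)\le 0$.

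I do not expect a genuine obstacle: the entire argument hinges on the single observation that residuation together with involution collapses the two defining conditions of $\rho$ into the pair $x\le y$ and $y\le x$. The only points demanding slight care are the order-reversing behaviour of $\sim$ used in the ideal check and the identity $\sim y\to 0=\sim\sim y$, which is nothing more than the definition of $\sim$ read off at $\sim y$.
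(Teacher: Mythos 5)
Your proof is correct and follows essentially the same route as the paper: both unwind $x\,\rho\,y$ into $x\ast\sim y\le 0$ and $y\ast\sim x\le 0$, apply residuation together with involution to obtain $x\le y$ and $y\le x$, and conclude by antisymmetry (the paper residuates on $\sim x$ and then uses order-reversal of $\sim$, whereas you residuate directly on $x$ via $\sim y\to 0=\sim\sim y=y$, a cosmetic difference). Your additional verification that $\{z:z\le 0\}$ is indeed an ideal, using $1\le\sim x\ast\sim y$ from Proposition~\ref{lem1}.4, is a sound extra step the paper leaves implicit.
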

\begin{proof}
Let $x\in I$.\\
Assume that $y\in [x].$ then $x\ast \sim y$ and $\sim x\ast y$ both are in $I$.\\
Then by construction of $I$, $y\ast \sim x\leq 0$ or $\sim x\leq y\to 0=\sim y$or $y \leq x$.\\
Similarly $x\leq x$. Then by combining two we get $x=y.$
\end{proof}
\section{Special types of ideal}
\begin{definition}
An ideal $I$ of a CL-algebra $L$ is said to be distributive if for any $x,y,z\in L$ we have $((x\cup y)\cap (x\cup z))\ast \sim (x\cup (y\cap z))\in I $. 
\end{definition}
\begin{example}
		The ideal $I=\{\bot ,a, 0,1\}$ in Example \ref{ex1} is  a distributive  ideal.\\
\end{example}
\begin{thm}
If $I$ be a distributive ideal in a CL-algebra $L$ then $L/I$ is  a distributive CL-algebra.
\end{thm}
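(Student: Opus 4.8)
The plan is to verify the distributive law directly in the quotient $L/I$. Since the lattice operations on $L/I$ are well-defined on representatives (by the earlier closure results), the join-over-meet identity $[x]\cup([y]\cap[z]) = ([x]\cup[y])\cap([x]\cup[z])$ is equivalent to the single congruence-class equality $[x\cup(y\cap z)] = [(x\cup y)\cap(x\cup z)]$. Recall that in lattice theory the two distributive laws are mutually equivalent, so establishing this one identity suffices to conclude that $L/I$ is distributive. Thus the whole problem reduces to showing $a\,\rho\,b$, where $a = x\cup(y\cap z)$ and $b = (x\cup y)\cap(x\cup z)$.

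By the definition of the congruence $\rho$, the relation $a\,\rho\,b$ holds if and only if both $a\ast\sim b\in I$ and $b\ast\sim a\in I$, and I would treat these two memberships separately. The inclusion $b\ast\sim a\in I$, namely $((x\cup y)\cap(x\cup z))\ast\sim(x\cup(y\cap z))\in I$, is exactly the defining condition of a distributive ideal, so it is available immediately by hypothesis.

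For the reverse inclusion $a\ast\sim b\in I$, I would exploit the fact that in any lattice the inequality $x\cup(y\cap z)\leq(x\cup y)\cap(x\cup z)$ holds unconditionally, that is, $a\leq b$. Proposition \ref{prop1} then applies: whenever $a\leq b$ one has $\sim(a\to b)\in I$. Rewriting through Proposition \ref{lem1}.13, which gives $a\to b=\sim(a\ast\sim b)$, and then applying involution, we obtain $\sim(a\to b)=\sim\sim(a\ast\sim b)=a\ast\sim b$; hence $a\ast\sim b\in I$. Combining the two inclusions yields $a\,\rho\,b$, so $[x\cup(y\cap z)]=[(x\cup y)\cap(x\cup z)]$ in $L/I$, which is precisely the distributive law for the quotient.

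The thing to get right is the bookkeeping that converts $\sim(a\to b)$ into $a\ast\sim b$ via Proposition \ref{lem1}.13 and involution, together with the observation that one of the two required inclusions is automatic from the universal lattice inequality while the other is exactly the distributivity hypothesis on $I$. I do not expect any genuinely hard step beyond this translation; the only point demanding care is matching the two halves of the $\rho$-definition to the correct inequality and the correct defining condition.
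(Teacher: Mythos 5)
Your proposal is correct and is essentially the paper's own argument: both prove $[x\cup(y\cap z)]=[(x\cup y)\cap(x\cup z)]$ by pairing the universal lattice inequality $x\cup(y\cap z)\leq(x\cup y)\cap(x\cup z)$ (converted into $a\ast\sim b\in I$ via Proposition \ref{prop1} and involution) with the defining membership $((x\cup y)\cap(x\cup z))\ast\sim(x\cup(y\cap z))\in I$ of a distributive ideal. The only difference is presentational: you verify the two halves of the congruence $\rho$ directly, whereas the paper routes the same two facts through Proposition \ref{prop2} as the pair of order inequalities $[x]\cup([y]\cap[z])\leq([x]\cup[y])\cap([x]\cup[z])$ and its converse in $L/I$.
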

\begin{proof}
$x\leq x\cup y, x\cap z\leq x\cup y$ therefore $x\cup (y\cap z)\leq x\cup y$, similarly  $x\cup (y \cap z)\leq x\cup z$. Hence $x\cup (y\cap z)\leq (x\cup y)\cap (x\cup z)$ and so $x\cup (y\cap z)\ast \sim( (x\cup y)\cap (x\cup z))$.Therefore $[x]\cup ([y]\cap [z]) \leq ([x]\cup[y])\cap ([x]\cup [z])$.\par
On the other hand from the definition of distributive ideal we have $([x]\cup[y])\cap ([x]\cup [z]) \leq [x]\cup ([y]\cap [z]) $.Hence the theorem.
\end{proof}
\begin{definition}
An ideal $I$ is said to be prime if for any $x, y \in L$, $\sim (x\to y) \in I$ or $\sim (y\to x )\in I$
\end{definition}
\begin{example}
	Example of a prime ideal.\\
		The ideal $I=\{\bot ,a, 0,1\}$ in Example \ref{ex1} is  a prime ideal.\\
where as the ideal $I=\{\bot,0,1 \}$ in Example \ref{ex2} is not a prime ideal as $\sim (1\to a)=\sim a=a$ and $\sim (a\to 1)=\sim a=a$ both are not in $I$.
\end{example}
\begin{thm}
If $I$ be a prime ideal then $L/I$ is linear.
\end{thm}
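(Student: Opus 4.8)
The plan is to recognize that, in the terminology of this paper (cf.\ Examples \ref{ex1} and \ref{ex2}), calling a CL-algebra \emph{linear} just means that its underlying lattice is totally ordered, i.e.\ a chain. So what must be shown is that for any two classes $[x],[y]\in L/I$ we have $[x]\leq[y]$ or $[y]\leq[x]$. The entire argument is a direct translation between the order on $L/I$ and membership in $I$, using the characterization of the quotient order already established in Proposition \ref{prop2}.

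First I would fix arbitrary $x,y\in L$ and invoke the hypothesis that $I$ is prime. By the definition of a prime ideal, at least one of $\sim(x\to y)\in I$ or $\sim(y\to x)\in I$ holds. This is exactly the place where primeness is used, and it is the only nontrivial input to the proof.

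Next I would apply Proposition \ref{prop2}, which states $[x]\leq[y]$ if and only if $\sim(x\to y)\in I$. Reading this in both directions, the condition $\sim(x\to y)\in I$ gives $[x]\leq[y]$, while $\sim(y\to x)\in I$ gives $[y]\leq[x]$. Combining with the previous step, for every pair of representatives we obtain comparability of the corresponding classes, so the partial order on $L/I$ is in fact total.

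I expect no genuine obstacle here: the work has already been done in setting up the quotient order and proving Proposition \ref{prop2}, so primeness plugs in mechanically. The only point requiring a line of care is the implicit well-definedness, namely that the comparability conclusion does not depend on the chosen representatives $x,y$; but since $[x]\leq[y]$ is itself defined purely in terms of the classes (and shown independent of representatives in Proposition \ref{prop2}), this is automatic. I would therefore close by stating that $L/I$ is a chain, hence a linear CL-algebra.
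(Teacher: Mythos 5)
Your proposal is correct and follows exactly the paper's own argument: invoke primeness to get $\sim(x\to y)\in I$ or $\sim(y\to x)\in I$, then apply Proposition \ref{prop2} in each case to conclude $[x]\leq[y]$ or $[y]\leq[x]$. Your added remark on representative-independence is a harmless elaboration already implicit in Proposition \ref{prop2}, so there is no substantive difference from the paper.
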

\begin{proof}
Let $I$ be a prime ideal of $L$\\
Let $[x], [y] \in L/I.$ then by definition of prime ideal$\sim (x\to y) \in I$ or $\sim (y\to x )\in I$ so by Proposition \ref{prop2} $[x]\leq[y]$ or $[y]\leq [x]$.
\end{proof}

\begin{definition}
Let $L$ be a CL-algebra. A non empty subset $I$ of $L$ is said to be  implicative ideal if 
\begin{itemize}
\item $0\in I$
\item $\sim (x\to (y\to z)) \in I$ and $\sim (x\to y) \in I$ then $\sim (x\to z)\in I$.
\end{itemize}
\end{definition}
\begin{prop}
If every element of a CL-algebra is idempotent and  $I$ be a ideal in $L$.Then $I$ is an implicative ideal.
\end{prop}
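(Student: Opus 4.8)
The plan is to convert the problem into a purely multiplicative statement, prove one sharp inequality (where idempotency does all the work), and then translate back using the involution and the ideal axioms. First I would record the basic rewriting rule $\sim(u\to v)=u\ast\sim v$, which follows from Proposition~\ref{lem1}.13 ($u\to v=\sim(u\ast\sim v)$) together with the involution law. Applying it to the two hypotheses, set $a=\sim(x\to(y\to z))\in I$ and $b=\sim(x\to y)\in I$; by the involution we then have $\sim a=x\to(y\to z)$ and $\sim b=x\to y$, and the goal $\sim(x\to z)\in I$ is exactly the membership of $x\ast\sim z$.

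The heart of the argument — and the one place idempotency is used — is the inequality $(x\to(y\to z))\ast(x\to y)\le x\to z$, i.e. $\sim a\ast\sim b\le x\to z$. I would prove it by residuation (Definition, clause 3): it suffices to show $(x\to(y\to z))\ast(x\to y)\ast x\le z$. Here I duplicate $x$ using idempotency, writing $x=x\ast x$, and then regroup by commutativity and associativity as $\bigl((x\to y)\ast x\bigr)\ast\bigl((x\to(y\to z))\ast x\bigr)$. Now Proposition~\ref{lem1}.9 gives $(x\to y)\ast x\le y$ and $(x\to(y\to z))\ast x\le y\to z$, so by monotonicity (Proposition~\ref{lem1}.7) the product is $\le y\ast(y\to z)$, which is $\le z$ by Proposition~\ref{lem1}.9 once more. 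This is the main obstacle: without idempotency only a single copy of $x$ is available, which lets one discharge exactly one of the two arrows but not both, so the estimate genuinely requires $x\ast x=x$.

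Finally I would transport this inequality through the involution. Since $\sim$ is antitone (Proposition~\ref{lem1}.10), $\sim a\ast\sim b\le x\to z$ yields $\sim(x\to z)\le\sim(\sim a\ast\sim b)$, and by the definition of $+$ (Proposition~\ref{lem1}.14) the right-hand side is precisely $a+b$. Because $a,b\in I$ and an ideal is closed under $+$, we get $a+b\in I$; then downward closure of $I$ forces $\sim(x\to z)\le a+b$ to lie in $I$. This is exactly the conclusion $\sim(x\to z)\in I$, together with the trivially satisfied condition $0\in I$, so $I$ is implicative. The only genuinely new ingredient beyond routine manipulation is the contraction-type inequality of the second paragraph; everything else is bookkeeping with Proposition~\ref{lem1} and the two relevant ideal axioms.
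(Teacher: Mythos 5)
Your proposal is correct and takes essentially the same route as the paper's own proof: the central contraction inequality $(x\to(y\to z))\ast(x\to y)\le x\to z$, obtained by duplicating $x$ via idempotency and applying $u\ast(u\to v)\le v$ twice, is exactly the paper's key step. Your conclusion via $\sim$-antitonicity, the identity $\sim(\sim a\ast \sim b)=a+b$, closure of $I$ under $+$, and downward closure likewise matches the paper's argument in substance.
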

\begin{proof}
Since $I$ is an ideal so $0\in I$.Let for all $x,y,z \in L$, $\sim (x\to (y\to z))\in I$ and $\sim (x\to y)\in I$.\\
Now $x\ast (x\to y)\leq y$ and $x\ast (x\to (y\to z))\leq (y \to z)$, hence $x\ast x\ast(x\to y)\ast (x\to (y\to z))\leq y\ast (y\to z)\leq z$,i.e., $(x\to y)\ast (x\to (y\to z))\leq x\to z$, so $\sim (x\to z)\leq \sim ((x\to y)\ast (x\to (y\to z))=\sim (x\to y) + \sim (x\to (y\to z))\in I$, therefore $\sim (x\to z) \in I$.Therefore $I$ is an implicative ideal.
\end{proof}
\begin{definition}
	An algebraic structure  $L=(L,\cup ,\cap, \bot, \to, \ast,1 )$ is said to be a residuated lattice if
\begin{itemize}
	\item $(L,\cup ,\cap, \bot, 1)$ is a bounded lattice.
	\item $(L,\ast, 1)$ is a commutative moniod.
	
	\item for any $x,y,z \in L$ , $x\ast y \leq z$ if and only if $x \leq y \to z$.
\end{itemize}
\end{definition}
\begin{prop}\label{rl}
	$x\ast y\leq x$ if and only if $\top=1.$
\end{prop}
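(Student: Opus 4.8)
The statement should be read with an implicit universal quantifier: the condition is that $x\ast y\leq x$ holds for all $x,y\in L$, and the proposition characterizes when a CL-algebra is integral (so that its monoid unit coincides with the lattice top). My plan is to prove the two implications separately, relying only on monotonicity of $\ast$ and on the fact (Proposition \ref{lem1}.2) that $\top$ is the largest element of $L$.

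For the direction $\top=1\Rightarrow x\ast y\leq x$, I would argue as follows. If $\top=1$, then since $\top$ is the top element, every $y\in L$ satisfies $y\leq 1$. Applying the monotonicity property of $\ast$ (Proposition \ref{lem1}.7) to the inequalities $x\leq x$ and $y\leq 1$ gives $x\ast y\leq x\ast 1$, and since $1$ is the unit of the monoid, $x\ast 1=x$. Hence $x\ast y\leq x$ for all $x,y$.

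For the converse, the key idea is to specialize the hypothesis to a single well-chosen instance rather than to manipulate the inequality in general. Assuming $x\ast y\leq x$ for all $x,y$, I would take $x=1$ and $y=\top$. This yields $1\ast\top\leq 1$; but $1\ast\top=\top$ because $1$ is the unit of the monoid, so $\top\leq 1$. On the other hand $1\leq\top$ always holds, since $\top$ is the largest element of $L$. Combining the two inequalities gives $\top=1$ by antisymmetry of the lattice order.

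There is no serious obstacle in either direction, as both reduce to the monoid-unit law together with the extremality of $\top$. The only step requiring a moment's thought is the converse, where one should resist the temptation to transform the inequality symbolically and instead simply substitute the extremal values $x=1$ and $y=\top$, which immediately collapses the condition to $\top\leq 1$.
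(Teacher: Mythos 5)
Your proof is correct and takes essentially the same route as the paper: both directions rest on the unit law $1\ast\top=\top$, monotonicity of $\ast$, and the fact that $\top$ is the largest element. Your converse is in fact a slight streamlining of the paper's argument, which first sandwiches $x\leq x\ast\top\leq x$ to get $x\ast\top=x$ before setting $x=1$, whereas you substitute $x=1$, $y=\top$ directly.
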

\begin{proof}
	Let $x\ast y\leq x.$ Therefore $1\leq T$ or $x\leq x\ast T\leq x$ so $x\ast \top =x$ or$ 1\ast \top =1$ or $\top =1$.\\
	Conversely assume that $\top =1$, now $y\leq \top$ or $x\ast y\leq x\ast \top=x.$
\end{proof}
\begin{note}
	From the Proposition \ref{rl}, it follows that an CL-algebra $L$ satisfying $x\ast y\leq x$  for all $x,y\in L$ (or equivalently $\top=1$) is a residuated lattce. \\
	If we consider $\top=1$ in a CL-algebra, by Proposition \ref{lem1}.4 the condition `$x \cup y \in I$ for all $x,y\in I$' of the definition of ideal $I$ on CL-algebra becomes redundant. Thus the concept of ideal on CL-algebra generalizes the notion of ideal on Residuated Lattice.
\end{note}
\begin{definition}
An ideal $I$ of a CL-algebra $L$ is said to be an affine ideal if $\top \ast 0\in I.$
\end{definition}
\begin{thm}
Let $I$ be an affine ideal of a CL-algebra $L$ then the quotient algebra $L/I$ is a residuated lattice. 
\end{thm}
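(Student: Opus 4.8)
The plan is to reduce the statement to the criterion recorded in the Note: a CL-algebra is a residuated lattice precisely when its top element coincides with the monoidal unit, i.e.\ when $\top = 1$. We already know from the earlier theorem that $(L/I, \cap, \cup, [\bot], \ast, \to, [0], [1])$ is a CL-algebra, with top element $[\top]$ and unit $[1]$. Hence it suffices to prove the single identity $[\top] = [1]$ in $L/I$, after which the Note applies verbatim.

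To establish $[\top] = [1]$, I would unwind the definition of the congruence $\rho$: we have $[\top] = [1]$ if and only if $\top \,\rho\, 1$, that is, if and only if both $\top \ast \sim 1 \in I$ and $1 \ast \sim \top \in I$. The first membership is exactly where the affine hypothesis enters. Using $1 \to x = x$ (Proposition \ref{lem1}.6) one computes $\sim 1 = 1 \to 0 = 0$, so that $\top \ast \sim 1 = \top \ast 0$, which lies in $I$ by the defining condition $\top \ast 0 \in I$ of an affine ideal.

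For the second membership I would use $\sim \top = \bot$ (Proposition \ref{lem1}.15) together with the fact that $1$ is the monoidal unit, giving $1 \ast \sim \top = \sim \top = \bot$; and $\bot \in I$, since $\bot \leq 0$, $0 \in I$, and $I$ is downward closed. Thus both required conditions hold, whence $\top \,\rho\, 1$ and therefore $[\top] = [1]$.

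With $[\top] = [1]$ in hand, the CL-algebra $L/I$ satisfies the hypothesis $\top = 1$ of the Note, and is consequently a residuated lattice. I do not anticipate a genuine obstacle here; the only point requiring care is the correct evaluation of the two negations $\sim 1 = 0$ and $\sim \top = \bot$, after which the affine condition supplies precisely the one nontrivial membership, and the downward closure of $I$ disposes of the other.
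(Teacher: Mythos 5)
Your proof is correct and takes essentially the same route as the paper: both reduce the claim to showing $[\top]=[1]$ in the quotient, using $\sim 1 = 0$ so that the affine hypothesis gives $\top \ast \sim 1 = \top \ast 0 \in I$, and then invoking the Note that a CL-algebra whose top element equals the unit is a residuated lattice. The only cosmetic difference is in the second membership: the paper derives $1 \ast \sim\top \in I$ from $1 \leq \top$ via Proposition \ref{prop1}, whereas you compute $1 \ast \sim\top = \sim\top = \bot$ and use $\bot \leq 0$ with downward closure of $I$; the two verifications are interchangeable.
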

\begin{proof}
 We have $1\leq \top$ then by residuation property, $1\leq 1\to \top$. So, $\sim (1\to \top)\in I $ i.e., $1\ast \sim \top \in I$.\\
 Again by definition of affine ideal we have $\top \ast 0 =\top \ast \sim 1 \in I$. So $[1]=[\top]$. 
 Therefore $L/I$ is a residuated lattice.
\end{proof}

\section{Conclusion}
We introduce the concept of ideal on CL-algebra in this paper. Some other algebraic structures closely related to CL-agebra, namely ACL-algebra, IL-algebra, ILZ-algebra, are not considered here. Ideal on these structures may be taken into consideration in future. Further properties on ideals of CL-algebra may also be investigated. This paper is a first step in this new area of research. It will open scope of work on multiple areas, study of ideals on algebraic structures related with linear logic, reflection of algebraic results based on ideals in corresponding logic and many more.

\section{Acknowledgment}
The last two authors acknowledge Department of Higher Education, Science \& Technology and Bio-Technology, Government of West Bengal, India for the financial support in the research project 257(Sanc)/ST/P/S\&T/16G-45/2017 dated 25.03.2018.

\end{document}